\documentclass[oneside]{amsart}
\UseRawInputEncoding
\usepackage[utf8]{inputenc}   
\usepackage[T1]{fontenc}      
\usepackage{geometry}                
\usepackage{graphicx}                
\usepackage{amssymb, amsmath}        
\usepackage{enumitem}                
\usepackage{hyperref}                
\usepackage{epstopdf}                

\newtheorem{theorem}{Theorem}[section]
\newtheorem{lemma}[theorem]{Lemma}

\newtheorem{corollary}[theorem]{Corollary}

\newtheorem{definition}[theorem]{Definition}

\newtheorem*{theorem*}{Theorem}   

\newcommand{\proj}{\text{proj}}

\title{A Fraction Dimensional Kakeya Compression}
\author{Jiahan Du}
\address{University of California, Los Angeles}
\email{jiahandu@math.ucla.edu}
\date{\today}

\begin{document}

\begin{abstract}
Recently, it was shown in \cite{guo2026curvedkakeyaproblemsprojective} that under a very general condition,  a generic family of curved Kakeya sets in $\mathbb{R}^3$ not trivially contained in a hypersurface has Hausdorff dimension strictly greater than 2. In this paper we leverage large language models to give an explicit construction of a family of curves failing the worst compression phenomenon in \cite{guo2026curvedkakeyaproblemsprojective}, but whose union has Hausdorff dimension and upper box dimension $5/2$. The example shows that there are Kakeya compression phenomena besides the trivial example of concentrating near a hypersurface. This answers a question raised in section 1.9 of \cite{guo2026curvedkakeyaproblemsprojective}. The models used are Chatgpt 5.6 Sol and Chatgpt 6 Astra.  
\end{abstract}

\maketitle

\section{Introduction}

Let \(\phi(x,t;y)\) be real analytic near the origin, where
\[
x,y\in\mathbb R^2,\qquad t\in\mathbb R,\qquad z=(x,t)\in\mathbb R^3.
\]Define
\[
G(z;y)
=
\partial_{y_1}\nabla_z\phi(z;y)
\times
\partial_{y_2}\nabla_z\phi(z;y).
\]Assume Hörmander’s conditions hold throughout the neighbourhood:
\[
\det\!\left[
\partial_{x_i}\partial_{y_j}\phi(x,t;y)
\right]_{i,j=1}^{2}\ne0,
\tag{H1}
\]and
\[
\det\!\left[
G(z;y)\cdot
\partial_{y_i}\partial_{y_j}\nabla_z\phi(z;y)
\right]_{i,j=1}^{2}\ne0.
\tag{H2}
\]Define the associated curve map locally by
\[
\nabla_y\phi\bigl(X(v,t;y),t;y\bigr)=v,
\qquad
\Phi(v,t;y)=\bigl(X(v,t;y),t\bigr),
\qquad v\in\mathbb R^2.
\]
Assume this family has the normal form:
\[
\Phi(w,t;\xi)=(X(w,t;\xi),t),
\]\[
X(w,t;\xi)
=
w-t\bigl(\xi+R(w,\xi)\bigr)+t^2S(w,t,\xi),
\]where \(R,S\) are analytic functions and
\[
|R(w,\xi)|\le C|w||\xi|,
\qquad
|S(w,t,\xi)|\le C|\xi|.
\]Fix sufficiently small \(\varepsilon>0\), and set
\[
B=\{a\in\mathbb R^2:|a|<\varepsilon\},
\qquad I=[-\varepsilon,\varepsilon].
\]Given \(w_0,\eta,\zeta\in B\), with \(\eta\ne\zeta\), let \(D\) comprise the pairs \((s,t)\in I^2\), \(s\ne t\), for which
\[
\begin{cases}
X(w,s;\xi)=X(w_0,s;\eta),\\
X(w,t;\xi)=X(w_0,t;\zeta)
\end{cases}
\]has a unique solution \((w,\xi)\in B\times B\). Write this solution as \((w(s,t),\xi(s,t))\).

We start with following definition taken from \cite{guo2026curvedkakeyaproblemsprojective}. 
\begin{definition}[Worst Kakeya Compression, Definition 1.23, \cite{guo2026curvedkakeyaproblemsprojective}]

The phase \(\phi\) satisfies worst compression if \(w_0,\eta,\zeta\) can be chosen such that:
\begin{enumerate}
\item The union
   \[
   \bigcup_{(s,t)\in D}
   \left\{
   \bigl(X(w(s,t),u;\xi(s,t)),u\bigr):u\in I
   \right\}
   \]forms a two-dimensional surface in \(\mathbb R^3\).
\item At some \((s_0,t_0)\in D\), the solution parameters are smooth in a neighborhood and
   \[
   \det\!\left[
   \partial_s\xi(s_0,t_0)\ \ 
   \partial_t\xi(s_0,t_0)
   \right]\ne0.
   \]
\end{enumerate}
\end{definition}

If we have worst Kakeya compression, then the curved Kakeya set is contained in a hypersurface, and has Hausdorff dimension 2. In this paper, we shall give an explicit construction that demonstrates the existence of fraction dimensional Kakeya compression, Theorem \ref{main}. \\

\textbf{Acknowledgment.} I thank Arian Nadjimzadah for informing me of concurrent independent work of Shaoming Guo and Mingfeng Chen, who have obtained a related, and possibly more general, construction, also with the assistance of AI.

\section{A Family of Curves}
Write $\omega=(\omega_1,\omega_2)$ for position parameters  and $\xi=(p,q)$ for direction parameters. Consider the map
\begin{equation}\label{curves}
    \Phi(\omega,t;p,q)=\bigg(\omega_1-tp-t^2\bigg(q+\frac{p^2}{2}\bigg)-t^3p, \omega_2-tq-t^2p, t \bigg).
\end{equation}

A direct computation shows that $\Phi$ is associated with the following phase function $\phi$,

\begin{equation*}
    \phi(x_1,x_2,t;p,q)=x_1p+x_2q+\frac{t}{2}(p^2+q^2)+t^2\bigg(pq+\frac{p^3}{6}\bigg)+\frac{t^3}{2}p^2. 
\end{equation*}
Another direct computation shows that $\phi$ satisfies the H\"ormander condition\footnote{Our choice of the phase function $\phi$ is motivated by equation 1.77 of \cite{guo2026curvedkakeyaproblemsprojective}. }.  

\begin{lemma}
There does not exist a nonempty open set $U\subset\mathbb{R}^2$ and a $C^2$ map $W:U\rightarrow\mathbb{R}^2$ such that the map 
\begin{equation}
    \Omega: (p,q,t)\longmapsto (W_1(p,q)-tp-t^2(q+p^2/2)-t^3p, W_2(p,q)-tq-t^2p, t)
\end{equation}
 has image contained in a two dimensional surface for all $t$ in a nonempty open interval. Furthermore, this implies that $\phi$ fails the worst Kakeya compression definition. 
\end{lemma}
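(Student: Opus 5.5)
The plan is to reduce the geometric condition to a pointwise Jacobian condition and then read off a polynomial identity in $t$. If the image of $\Omega$ over $U\times J$ ($J$ an open interval) lies in a two dimensional surface, then $\det D\Omega$ must vanish identically on $U\times J$. Indeed, if $\det D\Omega\neq 0$ at some point, the inverse function theorem makes the image of a neighborhood of that point an open subset of $\mathbb R^3$. An open set cannot be contained in a two dimensional surface (for instance, since the surface has Lebesgue measure zero).

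Since the third component of $\Omega$ is $t$, the determinant reduces to the $2\times 2$ minor in $(p,q)$ of the first two components. This gives
\[
\bigl(\partial_pW_1 - t - t^2p - t^3\bigr)\bigl(\partial_qW_2 - t\bigr)-\bigl(\partial_qW_1 - t^2\bigr)\bigl(\partial_pW_2 - t^2\bigr)\equiv 0 .
\]
For fixed $(p,q)$ this is a polynomial in $t$ vanishing on an open interval, so every coefficient vanishes. The $t^4$ terms cancel. The coefficients of $t^3,t^2,t^1,t^0$ yield, respectively,
\[
\partial_qW_2=p,\qquad 1-p\,\partial_qW_2+\partial_pW_2+\partial_qW_1=0,\qquad \partial_pW_1=-\partial_qW_2,\qquad \partial_pW_1\partial_qW_2-\partial_qW_1\partial_pW_2=0 .
\]

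Next I will integrate this overdetermined system on a small rectangle in $U$.
\begin{itemize}
\item From the first and third equations, $W_2=pq+f(p)$ and $W_1=-p^2/2+g(q)$, with $f,g$ of class $C^2$.
\item Substituting into the second equation gives $g'(q)+q+f'(p)=p^2-1$. Separating variables, $g'(q)=c-q$ and $f'(p)=p^2-1-c$ for some constant $c$.
\item The last equation then reads $-p^2-(c-q)(q+p^2-1-c)=0$ for all $(p,q)$ in an open set.
\item The coefficient of $p^2$ in this identity is $-1-(c-q)$. It must vanish for all $q$ in an open interval, which is impossible.
\end{itemize}
This contradiction proves the first claim. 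The main thing to get right is the coefficient bookkeeping; the argument itself is elementary.

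For the consequence, suppose $w_0,\eta,\zeta$ witness worst compression. By condition (2) and the inverse function theorem, $(s,t)\mapsto\xi(s,t)$ is a local diffeomorphism near $(s_0,t_0)$ onto an open set $U$ of directions $(p,q)$. Set $W=w\circ\xi^{-1}$ on $U$; it is $C^2$ because the solution parameters are smooth near $(s_0,t_0)$. Then the union in condition (1) contains $\Omega(U\times I)$, where $\Omega$ is built from $W$, since our curves are exactly $u\mapsto\Phi(W(p,q),u;p,q)$. Condition (1) says this lies in a two dimensional surface, with $I$ containing an open interval. This contradicts the first part, so $\phi$ fails worst Kakeya compression.
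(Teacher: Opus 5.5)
Your proof is correct and follows essentially the paper's route. Both arguments reduce to $\det\bigl(DW-H_p(t)\bigr)\equiv 0$, extract the same four coefficient equations in $t$, and end in a mixed-partials contradiction. The only difference is at the end: the paper solves $B+C=p^2-1$, $BC=-p^2$ to get $\partial_pW_2\in\{p^2,-1\}$ and compares $\partial_q\partial_pW_2=0$ with $\partial_p\partial_qW_2=1$, whereas you integrate the $t^3$ and $t^1$ equations on a rectangle first, which also avoids the unstated continuity step needed to pick one root branch. Your derivation of the worst-compression consequence via $W=w\circ\xi^{-1}$ supplies a step the paper only asserts.
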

\begin{proof}
    We assume otherwise. Surface containment forces 
    \begin{equation*}
        \det(DW-H_p(t))=0, \;\;\;\;\;\;\;\; H_p(t)=\begin{bmatrix}
            t+pt^2+t^3 & t^2\\
            t^2 & t
        \end{bmatrix}. 
    \end{equation*}
    Indeed, if $\det(DW-H_p(t))\not=0$ for some $t$, then $D\Omega$ would have rank 3 at that t, and its image would contain a nonempty open set by the inverse function theorem. 

    Write
    \begin{equation*}
        DW=\begin{bmatrix}
            A & B\\
            C & D
        \end{bmatrix}.
    \end{equation*}
    Expanding $\det(DW-H_p(t))=0$ as a polynomial of $t$ gives 
    \begin{equation*}
        AD-BC-(A+D)t+(1+B+C-pD)t^2+(p-D)t^3=0. 
    \end{equation*}
    Therefore 
    \begin{equation*}
        A=-p,\;\;\;\;\;D=p,\;\;\;\;\;B+C=p^2-1, \;\;\;\;\; BC=-p^2. 
    \end{equation*}
    It follows that 
    \begin{equation*}
        DW=\begin{bmatrix}
            -p & -1\\
            p^2 & p
        \end{bmatrix} \;\;\;\;
        \text{ or }\;\;\;\;
        DW= \begin{bmatrix}
            -p & p^2 \\
            -1 & p
        \end{bmatrix}.
    \end{equation*}
    In either case, we have $\partial_p\partial_q W_2=\partial_p D=1$, $\partial_q\partial_p W_2=\partial_q C=0$, contradicting equality of mixed partials. This proves that such $U$ and $W$ do not exist, which implies that $\phi$ fails the worst Kakeya compression condition. 
\end{proof}

Given $n\in\mathbb{Z}^+$, $\epsilon>0$, we denote by $B^n_\epsilon$ the open ball in $\mathbb{R}^n$ centered at the origin of radius $\epsilon$. 

\begin{theorem}\label{main}
    The family of curves in equation \ref{curves} satisfies H\"ormander's condition and fails the worst compression condition. Take $\varepsilon=1/100$ to be a small but absolute constant. There exists a compact parameter set $E\subset\mathbb{R}^4$ and a compact curved Kakeya set $K\subset\mathbb{R}^3$
    \begin{enumerate}
        \item For every $(p,q)\in\overline{B_\varepsilon^2}$,
there exists $\omega\in B_{2\varepsilon}^2$
such that $(\omega,p,q)\in E$;
        \item $\dim_P E=2$;
        \item $K$ is the union of $\Phi(\omega,t; p,q)$ for every $(\omega,p,q)\in E$ and every $|t|\leq\varepsilon$;
        \item $\dim_H K=\overline{\dim_B} K=5/2$.
    \end{enumerate}
\end{theorem}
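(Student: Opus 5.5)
The plan is to build $E$ as the closure of a graph $\{(W(p,q),p,q)\}$ over $\overline{B^2_\varepsilon}$, where $W$ is only piecewise smooth. On each small square the map $W$ is chosen so that the curves nearly compress onto a surface. The Lemma says this cannot be done exactly on any open set, because of the mixed-partials obstruction. That obstruction, however, only appears at second order in the size of the square, and this is what produces the exponent $5/2$.

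The first claims of the theorem are already in hand. The H\"ormander conditions come from the direct computation with $\phi$, and the failure of worst compression is the preceding Lemma. Property (1) holds by construction once $|W|<2\varepsilon$, and property (2) holds because $E$ is a graph over a $2$-dimensional set, so $\dim_P E\le 2$, while its projection to $(p,q)$ has dimension $2$.

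\textbf{Local model.} Fix $p_0$ and let $M(p_0)$ be one of the two matrices from the proof of the Lemma,
\[
M(p_0)=\begin{bmatrix} -p_0 & -1\\ p_0^2 & p_0\end{bmatrix}.
\]
This matrix is chosen so that $\det(M(p_0)-H_{p_0}(t))\equiv 0$ in $t$. On a square $Q$ of side $\rho$ centred at $(p_0,q_0)$, take $W$ affine with $DW=M(p_0)$. The map $\Omega$ restricted to $Q\times I$ then has Jacobian determinant $\det(M(p_0)-H_p(t))=O(|p-p_0|)=O(\rho)$. I will check that this makes the image $\Omega(Q\times I)$ lie in a neighbourhood of a fixed $2$-surface (the image with $p$ frozen at $p_0$) of thickness $O(\rho^2)$. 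By the Taylor expansion in $p-p_0$, the first-order term is tangent to that surface and only the quadratic term escapes.

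Take $\rho=\delta^{1/2}$. The piece $\Omega(Q\times I)$ then lies in a $\delta$-neighbourhood of a surface patch of area $\sim\rho$, so it is covered by $\lesssim \rho\,\delta\cdot\delta^{-3}$ balls of radius $\delta$. Summing over the $\rho^{-2}$ squares gives $\rho^{-1}\delta^{-2}=\delta^{-5/2}$ balls.

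\textbf{Lower bound.} For $\dim_H K\ge 5/2$ I will invoke the known Wolff hairbrush bound for curved Kakeya sets in $\mathbb R^3$ under H\"ormander's conditions, as used in \cite{guo2026curvedkakeyaproblemsprojective}. This requires the curve family to have one curve per direction $(p,q)$ in a full ball, which property (1) guarantees, together with the H\"ormander conditions already verified. Combined with $\dim_H\le\overline{\dim_B}$, it then remains only to show $\overline{\dim_B}K\le 5/2$.

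\textbf{Main obstacle: uniformity in $\delta$.} The single-scale construction above gives the count $\delta^{-5/2}$ only at the one scale $\delta=\rho^2$, whereas the upper box dimension requires an estimate at every scale. To handle this I would use a multiscale, nested construction. Choose dyadic scales $\rho_k=2^{-k}$ and make $W$ hierarchical, so that on each dyadic square of side $\rho_k$ the map $W$ differs from the affine model with Jacobian $M(\text{centre})$ by an error of size $O(\rho_k^2)$ in position and $O(\rho_k)$ in derivative. I would try to realise this as a lacunary sum of corrections, each adjusting $DW$ toward $M(p)$ at the next finer scale. The defect of this sum is exactly the non-integrable part $\partial_q C\neq\partial_p D$. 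It must be distributed so that the corrections have total size $\sum_{j\ge k}\rho_j^2\lesssim\rho_k^2$; this forces the correction sum, and hence $W$, to be discontinuous across square boundaries, so that $E$ is the closure of a graph rather than a $C^1$ graph.

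Given such a $W$, the covering argument at scale $\delta$ uses the squares of generation $k$ with $\rho_k\approx\delta^{1/2}$, and the finer corrections add only $O(\delta)$. I expect the hardest part to be checking that these accumulated corrections do not thicken the surface pieces beyond $\delta$, while keeping $|W|<2\varepsilon$ and $E$ compact.
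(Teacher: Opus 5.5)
\textbf{The genuine gap is the lower bound.} There is no Wolff-type hairbrush bound $\dim_H K\ge 5/2$ for curved Kakeya sets in $\mathbb R^3$ that follows from H\"ormander's conditions plus one curve per direction. The worst-compression families recalled in the introduction (Bourgain's example) satisfy (H1)--(H2) and have Kakeya sets of dimension $2$. Even under a non-degeneracy condition, \cite{guo2026curvedkakeyaproblemsprojective} only obtains dimension strictly greater than $2$ for generic families. A hairbrush argument needs a non-concentration hypothesis on the curves meeting a fixed curve, and that is exactly what H\"ormander's conditions fail to guarantee.

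The paper's lower bound is specific to this $K$ and uses that the parameter set is $2$-dimensional. Since $\omega_1=-(q+p^2/2)$ on $E$, the diffeomorphism $\Psi$ turns every curve of $K$ into a line $(a+tp,\,b+\tfrac t2p^2,\,t)$. So the $t$-slice of $\Psi(K)$ is $F_t(Q_s)$, and the directions $(p,p^2/2,1)$ form only a one-parameter family; this is a restricted-projection problem, not a Kakeya problem. Corollary 2.10 of \cite{nadjimzadah2026newcurvedkakeyaestimates} applies because $\omega_{\boldsymbol p,t}(p')=(p',p'^2/2)$ is coney. It gives $\dim_H F_t(Q_s)\ge 3/2$ for a.e.\ $t$, and slicing then gives $5/2$. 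To use this route, take $W_1=-(q+p^2/2)$ exactly. The first row of $M(p)$ is a genuine gradient, and only the second row $(p^2,p)$ is non-integrable. Your parameters then become a $2$-dimensional set of triples $(p,a,b)$ with $a=q+p^2/2$, and the same projection argument applies.

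Your upper bound is the paper's mechanism. The multiscale $W$ you leave as a plan is exactly what the Heisenberg square supplies. The paper's $E$ is the closure of the graph of $W(p,q)=(-(q+p^2/2),\,b)$. The estimate $|b-b_0-p_0(a-a_0)|\lesssim R^2$ on each of the $4^n$ pieces says that on every dyadic square of side $R$, at all scales at once, $W$ is within $O(R^2)$ of an affine map whose linear part is your $M(p_0)$. The paper's $O(1)\times O(R)\times O(R^2)$ boxes are your $O(\rho^2)$-neighbourhoods of ruled surface patches, straightened by $\Psi$.

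You can also build $W$ directly. On each child $S'$ of a square $S$, put $W^{(k+1)}=W^{(k)}(c_{S'})+M(p_{S'})(\cdot-c_{S'})$. Since $|M(p_{S'})-M(p_S)|\lesssim\rho_k$, the corrections are $O(\rho_k^2)$ and sum geometrically, so the step you flag as hardest is routine.

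Finally, your justification of $\dim_P E\le 2$ ("a graph over a $2$-dimensional set") is invalid: the closure of the graph of a discontinuous $W$ can have dimension up to $4$. What you need is a cover of $E$ by $\rho_k^{-2}$ pieces of diameter $O(\rho_k)$. This follows from the same $O(\rho_k^2)$ affine approximation and gives $\overline{\dim_B}E\le 2$.
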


To prove Theorem \ref{main}, we first construct a compact set $Q\subset\mathbb{R}^3$, based on the Heisenberg square construction in \cite{article}. For every pair of sequences $e=(e_j)_{j\geq 1}, f=(f_j)_{j\geq 1}$ consisting of $0$ and $1$ only, put
\begin{equation}
    x=\sum_{j\geq 1}2^{-j}e_j, \;\;\;\;\;y=\sum_{j\geq 1}2^{-j}f_j,\;\;\;\;\;z=\sum_{i<j}2^{-i-j}e_if_j,
\end{equation}
and let $Q$ be the set of all such triples. $Q$ is closed and bounded hence compact. Moreover 
\begin{equation}
    \proj_{x,y}Q=[0,1]^2,\;\;\;\;\;0\leq z\leq\sum_{i<j}2^{-i-j}=\frac{1}{3}. 
\end{equation}

Since $\proj_{x,y}(Q)=[0,1]^2$, we have $\dim_H Q\geq 2$. Given $n\in\mathbb{Z}^+$, we fix 
\[
e_1,\ldots,e_n,\qquad f_1,\ldots,f_n,
\]and let the remaining digits vary. Writing \(r=2^{-n}\) and
\[
x_0=\sum_{j\le n}2^{-j}e_j,\quad
y_0=\sum_{j\le n}2^{-j}f_j,\quad
z_0=\sum_{i<j\le n}2^{-i-j}e_if_j,
\]the resulting piece of \(Q\) is exactly
\begin{equation}\label{piece}
\left\{
\left(x_0+rx',\ y_0+ry',\
z_0+rx_0y'+r^2z'\right):
(x',y',z')\in Q
\right\}.
\end{equation}
From here we can deduce that each such piece of $Q$ has diameter $O(r)$. Since there are $O(r^{-2})$ number of such pieces, we see that $\overline{\dim_B} Q\leq 2$. This implies 
\begin{equation*}
    \dim_H Q=\dim_P Q=\overline{\dim_B} Q=2. 
\end{equation*}

Recall $\varepsilon=1/100$, we take $s=4\varepsilon$. Define the rescaled version of $Q$ by: 
\begin{equation}\label{qs}
    Q_s=\big\{(p,a,b)=\big(s(x-\frac{1}{2}), s(y-\frac{1}{2}), s^2(z-\frac{1}{2}y)\big):(x,y,z)\in Q\big\}. 
\end{equation}
Then $\proj_{p,a}Q_s=[-s/2,s/2]^2$, and $|b|\leq s^2/2$ if $(p,a,b)\in Q_s$. Define
\[
E=\left\{
\left(-a,b,p,a-\frac{p^2}{2}\right):
(p,a,b)\in Q_s
\right\}.
\] Clearly 
\begin{equation*}
    \dim_H E=\dim_P E=\overline{\dim_B} E=2. 
\end{equation*}
This is (2) of Theorem \ref{main}. (1) of Theorem \ref{main} follows from a direct computation and the fact that $\proj_{p,a}Q_s=[-s/2,s/2]^2$. Define \(K\) to be the union of curve segments for \(|t|\le\varepsilon\). 

\section{$\dim_H K=5/2$}

For a point 
\begin{equation*}
\left(-a,b,p,a-\frac{p^2}{2}\right)
\end{equation*}
in $E$, the first two coordinates of the curves in equation \ref{curves} simplify to 
\begin{equation*}
    \Phi_1=-(1+t^2)(a+tp),\;\;\;\;\;\;\;\;\Phi_2=b-t(a-p^2/2)-t^2p. 
\end{equation*}

Define the following diffeomorphism on $\mathbb{R}^3$, 
\begin{equation*}
    \Psi(x_1,x_2,t)=\bigg(-\frac{x_1}{1+t^2}, x_2-\frac{tx_1}{1+t^2},t\bigg)
\end{equation*}
sends the curve $t\mapsto(\Phi_1,\Phi_2,t)$ to the following straight line 
\begin{equation*}
    l_{p,a,b}(t)=\bigg(a+tp, b+\frac{t}{2}p^2,t\bigg).
\end{equation*}
Therefore, we only need to show that if
\begin{equation*}
K'=\{l_{p,a,b}(t)|\,(p,a,b)\in Q_s, |t|\leq\varepsilon\},
\end{equation*}
then $\overline{\dim_B}K'\leq5/2$. To see why, for a point $(x,y,z)$ belonging to a piece such as equation \ref{piece}, we have 
\begin{gather}
    |x-x_0|+|y-y_0|+|z-z_0|\lesssim r \\
    |z-z_0-x_0(y-y_0)|\lesssim r^2. 
\end{gather}
Recall $Q_s$ is obtained from $Q$ under an affine map, let $(p_0,a_0,b_0)$ and $(p,a,b)$ be the images of $(x_0,y_0,z_0)$ and $(x,y,z)$ respectively. Let $R=s2^{-n}$, then a direct computation shows that 
\begin{gather}
    |p-p_0|+|a-a_0|+|b-b_0|\lesssim R\\
    |b-b_0-p_0(a-a_0)|\lesssim R^2. 
\end{gather}
Also notice that there are $4^n=s^2R^{-2}$ such pieces. Define 
\begin{equation*}
    U=(a+tp)-(a_0+tp_0),\;\;\;\;\;\;V=(b+tp^2/2)-(b_0+tp_0^2/2). 
\end{equation*}
Another direct computation shows that 
\begin{equation*}
|U|\lesssim R, \;\;\;\;\;\;\;|V-p_0U|=\bigg|(b-b_0-p_0(a-a_0))+\frac{t}{2}(p-p_0)^2\bigg|\lesssim R^2. 
\end{equation*}
Consequently, the line $l_{p,a,b}$ is covered in a rectangular box of dimensions $O(1)\times O(R)\times O(R^2)$. Since there are $s^2R^{-2}\lesssim R^{-2}$ such pieces as equation \ref{piece}, we see that the union of all segments $K'$ is covered in $R^{-2}$ number of rectangular boxes of dimension $O(1)\times O(R)\times O(R^2)$. Each box of dimension $O(1)\times O(R)\times O(R^2)$ is covered by $O(R^{-3})$ number of balls of diameter $R^2$. Therefore $K'$ is covered by $O(R^{-5})$ number of balls of radius $R^2$. This allows us to conclude that $\overline{\dim_B}K'\leq 5/2$. 

The matching Hausdorff dimension lower bound for $K$ comes from results in \cite{nadjimzadah2026newcurvedkakeyaestimates}. Denote the unit ball in $\mathbb{R}^n$ by $B_n$. 

\begin{definition}[Definition 2.1, \cite{nadjimzadah2026newcurvedkakeyaestimates}]
    Let $X(\boldsymbol{p},t):B_3\times B_1\to\mathbb{R}^2$ be a smooth map which we call the defining function. The point $\boldsymbol{p}\in B_3$ determines the following curve segment 
    \begin{equation}
        l_{\boldsymbol{p}}(t)=(X(\boldsymbol{p},t),t)
    \end{equation}
    and $t\in B_1$. Write $\mathcal{C}(X)$ for the 3-parameter family of these curves. 
\end{definition}

\begin{definition}[Definition 2.2, \cite{nadjimzadah2026newcurvedkakeyaestimates}]
Write $\boldsymbol{p}=(p,\vec{a})\in\mathbb{R}\times\mathbb{R}^2$, where $\vec{a}=(a,b)$, and define the curves $\omega_{\boldsymbol{p},t}(p')=(\partial_t X)(p',\vec{a}_{\boldsymbol{p},t}(p'),t)$, where $\vec{a}_{\boldsymbol{p},t}$ solves
\begin{equation}
    X(p',\vec{a}_{\boldsymbol{p},t}(p'),t)=X(\boldsymbol{p},t). 
\end{equation}
\end{definition}

\begin{definition}[Condition 2, \cite{nadjimzadah2026newcurvedkakeyaestimates}]
Given $c>0$, the map $X$ is $c$-coney if for each point $(\boldsymbol{p},t)$ we have  
\begin{equation}
    \left|
\det\left(
\left.\frac{d}{dp'}\omega_{\boldsymbol p,t}(p')\right|_{p'=p},
\left.\frac{d^2}{d(p')^2}\omega_{\boldsymbol p,t}(p')\right|_{p'=p}
\right)
\right|\ge c.
\end{equation}
\end{definition}

\begin{corollary}[Corollary 2.10, \cite{nadjimzadah2026newcurvedkakeyaestimates}]\label{Arian}
Let $F_t:\mathbb{R}^3\to\mathbb{R}^2$ be a family of maps, and define $X(\boldsymbol{p},t)=F_t(\boldsymbol{p})$. Let $A$ be a Borel subset of $\mathbb{R}^3$ with $\dim_H A=2$. If $X$ satisfies the "basic conditions" defined in \cite[Condition 4]{nadjimzadah2026newcurvedkakeyaestimates} and is $c$-coney for some positive constant $c>0$, then $\dim_H F_t(A)\geq 1.5$ for a.e. $t\in[0,1]$. 
\end{corollary}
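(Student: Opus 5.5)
The plan is to prove the corollary by the potential-theoretic (Kaufman) method, run at a single scale $\delta$ and then summed dyadically. Since $\dim_H A=2$, for every $\alpha<2$ Frostman's lemma gives a probability measure $\mu$ on $A$ with $\mu(B(x,r))\lesssim r^{\alpha}$. It then suffices to show, for every $\sigma<3/2$ and $\alpha$ close enough to $2$, that
\[
\int_0^1 I_\sigma\bigl((F_t)_\#\mu\bigr)\,dt
=\int_0^1\!\!\iint |F_t(x)-F_t(y)|^{-\sigma}\,d\mu(x)\,d\mu(y)\,dt<\infty .
\]
This gives $\dim_H F_t(A)\ge\sigma$ for a.e.\ $t$, and letting $\sigma\uparrow 3/2$ along a countable sequence finishes the proof.

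The first step is localization and linearization using the basic conditions. By compactness we may assume $\mu$ is supported in a small ball and that $F_t$ is a submersion $\mathbb{R}^3\to\mathbb{R}^2$ for all $t$, uniformly. Then each fibre $F_t^{-1}(v)$ is a curve, and for $|x-y|\sim 2^{-k}$ we have $|F_t(x)-F_t(y)|\sim 2^{-k}\,\angle\bigl(x-y,\ \text{fibre direction at }(x,t)\bigr)$ up to lower-order errors. In the language of Definition 2.2, $F_t(x)=F_t(y)$ means that $y$ lies on the same curve as $x$ at time $t$, parametrised by $p'\mapsto(p',\vec a_{\boldsymbol p,t}(p'))$.

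The second step is a sublevel-set estimate for the bad times. For a fixed pair $x,y$, we want to bound the measure of
\[
\{t:\ |F_t(x)-F_t(y)|<\delta\}.
\]
The $c$-coney condition says that $\omega_{\boldsymbol p,t}$ is a curve with nonvanishing curvature. I expect this to mean that, as $t$ varies, the fibre directions through a point sweep out a genuinely curved cone, and not a plane. Consequently a fixed direction $x-y$ can only be approximately tangent to that cone, with quadratic vanishing. This should yield
\[
|\{t:\ |F_t(x)-F_t(y)|<\delta\}|\lesssim\Bigl(\frac{\delta}{|x-y|}\Bigr)^{1/2}.
\]
Plugging this into the energy integral, pairs at separation $\sim 2^{-k}$ contribute roughly
\[
\sum_{j}2^{\sigma j}\,(2^{k-j})^{1/2}\,\mu\times\mu\bigl(|x-y|\sim 2^{-k}\bigr).
\]
Used naively, this gives only $\sigma<1/2+$ (something depending on $\alpha$). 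So the pointwise estimate must be combined with the Frostman structure of $\mu$.

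The third step, which I expect to be the main obstacle, is upgrading to the sharp exponent $3/2=\dim A-1/2$. This is the curved analogue of the restricted projection theorems for projections along a nondegenerate cone of directions. At scale $\delta$ I would discretize: cover $A$ by $\delta$-balls, and consider for each $t$ the $\delta$-tubes around the fibres of $F_t$. Then $\dim F_t(A)<3/2$ on a positive-measure set of $t$ would produce a family of $\approx\delta^{-3/2}$ tubes per $t$, each containing many $\delta$-balls of a $2$-dimensional set. Summed over $t$, this is a Kakeya/incidence configuration of curves satisfying the cinematic (coney) curvature condition. I would try to close the argument with a Wolff-type square-function or $L^p$ local smoothing estimate for the associated averaging operator, or a bilinear incidence bound for curved tubes; either plausibly loses exactly a factor $\delta^{-1/2}$, giving $\alpha-1/2$. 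The cone's curvature, i.e.\ the coney condition, is what rules out the planar-concentration counterexample. If that estimate is available in the form proved in the cited work, the corollary follows by letting $\alpha\to 2$.
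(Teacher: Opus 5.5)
\textbf{The statement is not proved in the paper, and your proposal has a gap.} The paper imports this corollary from \cite{nadjimzadah2026newcurvedkakeyaestimates}, so your proposal has to stand on its own. It does not. The only step that could reach $3/2$ is your third one, and it is a hope rather than an argument: ``I would try'', ``plausibly loses exactly a factor $\delta^{-1/2}$'', ``if that estimate is available''. No estimate is ever formulated, and the tube count by itself yields no exponent.

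Steps 1--2 also cannot serve as its foundation. Suppose $F_{t_0}(x)=F_{t_0}(y)$ transversally. Then $|F_t(x)-F_t(y)|\approx|t-t_0|$, so $\int_0^1|F_t(x)-F_t(y)|^{-\sigma}\,dt=\infty$ for every $\sigma\ge1$. For each $x$, such $y$ fill the two-dimensional collision surface $\Sigma_x=\bigcup_t F_t^{-1}(F_t(x))$. In the paper's model the averaged kernel behaves like $|x-y|^{-1}\operatorname{dist}(y,\Sigma_x)^{1-\sigma}$ for $\sigma>1$. Inserting the Frostman bound separately for each $x$ therefore gives only $\sigma<\alpha-1$. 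To reach $\alpha-\tfrac12$ you must show that $\mu\times\mu$ cannot concentrate near $\{(x,y):y\in\Sigma_x\}$ \emph{on average over $x$}. That is an $L^2$ statement, and it is where the coney hypothesis has to enter.

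\textbf{Your reading of the coney condition is off.} This is also why your Step 2 goes astray. The curve $\omega_{\boldsymbol{p},t}(p')$ varies in $p'$ along a fibre at \emph{fixed} $t$: it is the cone of directions $(\omega,1)$ of the curves $l_{\boldsymbol{p}'}$ through the single point $(X(\boldsymbol{p},t),t)$. It is not a cone swept out in parameter space as $t$ varies. In the paper's model $F_t(p,a,b)=(a+tp,\,b+tp^2/2)$:
\begin{itemize}
\item The fibre tangent at $(p,a,b)$ is $(1,-t,-tp)$. As $t$ varies these span the \emph{plane} $\ker(db-p\,da)$, not a curved cone.
\item $F_t(x)-F_t(y)$ is affine in $t$, so the sublevel sets have measure $\lesssim\delta/|x-y|$, with no square root.
\item The curvature is the non-integrability of $db-p\,da$, which makes $\Sigma_x$ a saddle.
\item The paper's $Q_s$, whose pieces of diameter $R$ lie within $R^2$ of these contact planes, is exactly the configuration that makes $3/2$ sharp.
\end{itemize}

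\textbf{What a complete argument needs.} One route is an $L^2$ estimate; no local smoothing, square function or incidence theorem is required. Set $T\mu(v,t)=(F_t)_\#\mu(v)$. Then $\int_0^1 I_\sigma((F_t)_\#\mu)\,dt\approx\|T\mu\|^2_{L^2_t\dot H^{(\sigma-2)/2}_v}$. In the model, $T^*g(p,a,b)=\int g(a+tp,\,b+tp^2/2,\,t)\,dt$ is the X-ray transform restricted to the well-curved line complex with directions $(p,p^2/2,1)$. Its canonical relation has only fold/blowdown singularities, so the $L^2$-Sobolev gain is $1/4$ instead of $1/2$ (Greenleaf--Uhlmann). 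In general the coney condition plays the role of well-curvedness. This bounds the averaged energy by $I_{\sigma+1/2}(\mu)$, which is finite for $\sigma<\alpha-\tfrac12$, and letting $\alpha\to2$ gives $3/2$.

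In the linear analogue (projections onto $\gamma(\theta)^{\perp}$ with $\gamma\subset S^2$ nondegenerate) this is just Plancherel. The map $(\theta,\xi)\mapsto\xi$, $\xi\in\gamma(\theta)^{\perp}$, has Jacobian $\approx|\xi|^{1/2}\operatorname{dist}(\xi,\Gamma)^{1/2}$, where $\Gamma=\{r\,\gamma(\theta)\times\gamma'(\theta)\}$. This is the kind of averaged estimate your proposal needs to supply.
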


Define 
\begin{equation*}
F_t(p,a,b)=\bigg(a+tp, b+\frac{t}{2}p^2 \bigg).
\end{equation*}

Since $\dim_H Q_s=2$, and a direct calculation shows that $\omega_{\boldsymbol{p},t}(p')=(p',\frac{p'^2}{2})$. Therefore Corollary \ref{Arian} applies, and we can conclude that 
\begin{equation*}
    \dim_H F_t(Q_s)\geq\frac{3}{2} \;\;\;\text{for almost every }t\in[0,\varepsilon]. 
\end{equation*}

Since above holds for almost every $t$, we see that $\dim_H K'\geq\frac{3}{2}+1$. This concludes the lower bound proof and the proof of Theorem \ref{main}. 

\bibliographystyle{ieeetr} 
\bibliography{references}
\end{document}